\newcommand{\refpar}{Sect.~}
\newcommand{\Do}{{\raisebox{0.2ex}{\(\stackrel{\circ}{\mathcal{D}}\)}}{}}
\DeclareSymbolFont{myletters}{OML}{ztmcm}{m}{it}
\DeclareMathSymbol{\uplambda}{\mathord}{myletters}{"15}
\newtheorem{theorem}{Theorem}
\newtheorem{lemma}{Lemma}
\newtheorem{proposition}{Proposition}
\theoremstyle{definition}
\newtheorem{remark}{Remark}
\begin{document} 
\title{Non-radial minimizers for Hardy--Sobolev \\ inequalities in non-convex cones}
\author{A.~I.~Nazarov\footnote{e-mail: al.il.nazarov@gmail.com}, 
\setcounter{footnote}{6}
N.~V.~Rastegaev\footnote{e-mail: rastmusician@gmail.com} \\
\small{St. Petersburg Department of Steklov Mathematical Institute} \\
\small{of Russian Academy of Sciences} \\ \small{
27 Fontanka, 191023, St. Petersburg, Russia} }
\renewcommand{\today}{}
\maketitle
\abstract{
The symmetry breaking is obtained for Neumann problems driven by $p$-Laplacian in certain non-convex cones. These problems are generated by the Hardy--Sobolev inequalities. In the case of the Sobolev inequality for the ordinary Laplacian this problem was investigated in \cite{Ciraolo2024}. 

Such problems have obvious radial solutions --- Talenti--Bliss type functions of $|x|$. However, under a certain restriction on the first Neumann eigenvalue $\lambda_1(D)$ of the Beltrami--Laplace operator on the spherical cross-section $D$ of the cone we prove this radial solution cannot be an extremal function, therefore minimizer must be non-radial. This leads to multiple solutions for the corresponding Neumann problem. 
}

\section{Introduction}

Let $n\geqslant 2$, $1<p<n$, $0 < \sigma\leqslant 1$. Denote $q = p_\sigma^* = \frac{np}{n-\sigma p}$. The Hardy--Sobolev (Sobolev for $\sigma=1$) inequality in the whole space reads as follows:
\begin{equation}\label{eq:HardiSobolevSpace}
\|r^{\sigma-1}v\|_{q, \mathbb{R}^n} \leqslant C \|\nabla v\|_{p, \mathbb{R}^n}, \quad v\in \mathcal{C}_0^\infty(\mathbb{R}^n).
\end{equation}
Hereinafter $r=|x|$. This allows us to define the space of functions
\[
\mathcal{D}_{p}(\mathbb{R}^n) = \{ u\in L_q(\mathbb{R}^n, r^{(\sigma-1)q}) : \nabla u\in L_p(\mathbb{R}^n) \}
\]
(notice that this space does not depend on $\sigma$) and extend the inequality \eqref{eq:HardiSobolevSpace} to this space. Moreover, the exact constant in this inequality is attained at radially symmetric Talenti--Bliss type functions
\begin{equation}\label{eq:w_def}
w(r) = \Big(1 + r^{\frac{\sigma (n-p) q}{n(p-1)}}\Big)^{-\frac{n}{\sigma q}}
\end{equation}
or their homotheties (and shifts in the case $\sigma=1$). This result was obtained in \cite{Aubin1976}, \cite{Talenti1976} independently for $\sigma=1$, in \cite{Lieb1983} for $0<\sigma<1$, $p=2$ and in \cite{Ghoussoub2000} for $0<\sigma<1$, $1<p<n$.

Consider a domain $D$ with strictly Lipschitz boundary on the unit sphere $\mathbb{S}^{n-1}$, and the cone $$\Sigma_D = \{xt: x\in D, t\in (0,+\infty)\} \subset \mathbb{R}^n$$ spanning $D$. Define the space
\[
\mathcal{D}_{p}(\Sigma_D) = \{ u\in L_q(\Sigma_D, r^{(\sigma-1)q}) : \nabla u\in L_p(\Sigma_D) \}.
\]
From \eqref{eq:HardiSobolevSpace} it follows that in the cone $\Sigma_D$ the inequality
\begin{equation}\label{eq:HardiSobolevCone}
\|r^{\sigma-1}v\|_{q, \Sigma_D} \leqslant C \|\nabla v\|_{p, \Sigma_D}, \quad v \in \mathcal{D}_{p}(\Sigma_D),
\end{equation}
holds. If the exact constant in \eqref{eq:HardiSobolevCone} is attained, then the necessary condition for a minimizer is the Neumann problem
\begin{equation}\label{eq:Neumann_problem}
\begin{cases} 
-\Delta_p u = \lambda \dfrac{u^{q-1}}{|x|^{(1-\sigma)q}} &\text{in } \Sigma_D, \\
\dfrac{\partial u}{\partial \nu} = 0 &\text{on } \partial \Sigma_D, \\
u > 0 &\text{in } \Sigma_D.
\end{cases}
\end{equation}
Here $\Delta_p u = \mathrm{div}(|\nabla u|^{p-2}\nabla u)$ is the so-called $p$-Laplacian and $\lambda$ is the Lagrange multiplier. 

This problem has an evident radial solution \eqref{eq:w_def}.
But the question whether the exact constant in \eqref{eq:HardiSobolevCone} is attained, and whether function \eqref{eq:w_def} is a minimizer or not, is non-trivial. The cases of symmetry breaking are therefore of interest.

In \cite{Nazarov2006} the attainability of the exact constant was proved for the Dirichlet case
\[
\|r^{\sigma-1}v\|_{q, \Sigma_D} \leqslant C \|\nabla v\|_{p, \Sigma_D}, \quad v \in \Do_{p}(\Sigma_D),
\]
for $1<p<\infty$, $0<\sigma<\min(1, n/p)$ (in the case $p=2$ this was previously proved in \cite{Egnell1992}). Note that the difference from \eqref{eq:HardiSobolevCone} is that functions $v$ vanish at the boundary of the cone. We mention this result, because it's translation to the Neumann case \eqref{eq:HardiSobolevCone} for $1<p<n$, $0<\sigma<1$ is very straightforward. 

In \cite{Lions1988} (see also \cite{Ciraolo2020}) convex cones were considered for $n\geqslant 2$, $1<p<n$, $\sigma=1$, and the exact constant in \eqref{eq:HardiSobolevCone} was shown to be attained only at radially symmetrical functions. 

In \cite{ClappPacella2019} the corresponding critical Neumann problem was considered for $n\geqslant 3$, $p=2$, $\sigma=1$. The attainability of exact constant in \eqref{eq:HardiSobolevCone} was established for $D$ that have a point with local convexity condition and the existence of non-radial minimizers was established in some cases. In \cite{Ciraolo2024}, a wider class of domains $D$ was introduced for which the minimizer is non-radial. 

In this paper we aim to provide a similar class of domains for the general case $n\geqslant 2$, $1<p<n$, $0<\sigma\leqslant 1$. For $0<\sigma<1$ the attainability of the exact constant is obtained similar to \cite{Nazarov2006} with no changes to the proof. For $\sigma=1$ we prove the attainability under certain restrictions on $D$. Then, for $0<\sigma\leqslant 1$ we show that the exact constant cannot be attained on radial functions if the first Neumann eigenvalue $\lambda_1(D)$ of the Beltrami--Laplace operator on $D$ is sufficiently small.

 Below $B(x, r)$ stands for the ball with center at $x$ and radius $r$. We denote by $\mathbb{R}^n_+$ the half-space and by $\mathbb{S}^{n-1}_+$ the half-sphere.
 Letter $C$ stands for various positive constants the values of which are not essential. 

The paper has the following structure. \refpar 2 contains the proof of attainability for the subcritical case $0<\sigma<1$ and (under certain restrictions on the domain $D$) for the Sobolev case $\sigma=1$. In \refpar 3 we prove the symmetry breaking for sufficiently small $\lambda_1(D)$. The Appendix contains the proof of an auxiliary concentration-compactness type statement applied in \refpar 2.

\section{Attainability of the exact constant}
\subsection{Subcritical case $0<\sigma<1$}

In this subsection we consider $0 < \sigma < 1$, $q = p_\sigma^* = \frac{np}{n-\sigma p} < p^* = \frac{np}{n-p}$. This case is, therefore, subcritical.

Define the Hardy--Sobolev quotient
\begin{equation}\label{eq:functional_Q_Hardy_Sobolev}
Q_{\Sigma_D}^\sigma(u) = \dfrac{\|\nabla u\|_{L_p(\Sigma_D)}^p}{\|r^{(\sigma-1)}u\|_{L_q(\Sigma_D)}^p}, \qquad u\in \mathcal{D}_{p}(\Sigma_D), \quad u\neq 0.
\end{equation}

We aim to demonstrate the attainability of
\[
S^\sigma_{\Sigma_D} = \inf\limits_{u\in\mathcal{D}_{p}(\Sigma_D)\setminus\{0\}} Q^\sigma_{\Sigma_D}(u).
\]

Consider a minimizing sequence $u_k$ for the functional \eqref{eq:functional_Q_Hardy_Sobolev}. Without loss of generality we can assume that all $u_k$ have bounded supports, otherwise we can cut them off at sufficiently large radii. Therefore, we can consider them as functions on a compactified cone $\overline \Sigma_D = \Sigma_{\overline{D}}\cup\{\infty\}$ (obtained by adding a single ``point at infinity'', which we will denote by $\infty$). In such space the following variant of Lemma 3.1 from \cite{Nazarov2006} can be formulated (see also \cite{Lions1984}).

\begin{proposition}
\label{proposition:lemma_3_1_Nazarov}
Let $u_k$ be a bounded sequence in $\mathcal{D}_{p}(\Sigma_D)$ with bounded supports. Then, up to a subsequence, $u_k\rightharpoondown u$, and the following weak convergences hold in the space of measures in $\overline \Sigma_D$:
\begin{align*}
|r^{\sigma-1}u_k|^q \rightharpoondown &\;\mu = |r^{\sigma-1}u|^q + \mu_0 \delta_0 + \mu_\infty \delta_\infty, \\
|\nabla u_k|^p \rightharpoondown &\;\widetilde{\mu} \geqslant |\nabla u|^p + S^\sigma_{\Sigma_D}\mu_0^{p/q}\delta_0 + S^\sigma_{\Sigma_D}\mu_\infty^{p/q}\delta_\infty,
\end{align*}
where $\mu_0, \mu_\infty \geqslant 0$.
\end{proposition}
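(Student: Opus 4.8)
The plan is to establish this concentration-compactness dichotomy by adapting the classical Lions argument to the compactified cone $\overline{\Sigma}_D$, tracking how the loss of compactness in the critical-exponent embedding can only occur at the two scale-invariant points: the apex $0$ and the point at infinity $\infty$. The strategy rests on the fact that the quotient $Q^\sigma_{\Sigma_D}$ is invariant under the dilations $u(x)\mapsto t^{(n-p)/p}u(tx)$, so concentration is governed entirely by these dilations, which on the compactified cone have exactly the two fixed points $0$ and $\infty$.

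\medskip

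\textbf{Step 1 (existence of limit measures).} Since $u_k$ is bounded in $\mathcal{D}_p(\Sigma_D)$, the sequences $|\nabla u_k|^p$ and $|r^{\sigma-1}u_k|^q$ are bounded in $L_1$, hence bounded in the space of finite measures on the compact space $\overline{\Sigma}_D$. By weak-$*$ compactness of measures, after passing to a subsequence I obtain $u_k\rightharpoondown u$ in $\mathcal{D}_p(\Sigma_D)$ together with $|\nabla u_k|^p\rightharpoondown\widetilde{\mu}$ and $|r^{\sigma-1}u_k|^q\rightharpoondown\mu$ in the weak-$*$ sense of measures. The task is to identify the atomic parts of $\mu$ and the lower bound on $\widetilde{\mu}$.

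\medskip

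\textbf{Step 2 (the Hardy--Sobolev inequality localized).} The key inequality is obtained by applying \eqref{eq:HardiSobolevCone} to the products $\varphi u_k$ for $\varphi\in\mathcal{C}^\infty(\overline{\Sigma}_D)$, which gives
\[
\|r^{\sigma-1}\varphi u_k\|_{q,\Sigma_D}\leqslant (S^\sigma_{\Sigma_D})^{-1/p}\|\nabla(\varphi u_k)\|_{p,\Sigma_D}.
\]
Passing to the limit, the right-hand side is controlled by $\|\varphi\|_{L_\infty}\widetilde{\mu}(\overline{\Sigma}_D)^{1/p}$ plus a term involving $\nabla\varphi$ and the strong $L_p$-convergence of $u_k$ on sets away from the concentration points; the latter term vanishes as the support of $\nabla\varphi$ shrinks to a point. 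This yields the reverse-Hölder relation $\big(\int|\varphi|^q\,d\mu\big)^{1/q}\leqslant (S^\sigma_{\Sigma_D})^{-1/p}\big(\int|\varphi|^p\,d\widetilde{\mu}\big)^{1/p}$ between the limit measures. By the standard lemma of Lions (Lemma I.2 in \cite{Lions1984}), such an inequality forces $\mu-|r^{\sigma-1}u|^q$ to be purely atomic, say $\sum_j\mu_j\delta_{x_j}$, with $\widetilde{\mu}\geqslant|\nabla u|^p+S^\sigma_{\Sigma_D}\sum_j\mu_j^{p/q}\delta_{x_j}$.

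\medskip

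\textbf{Step 3 (locating the atoms — the main obstacle).} The crucial point, and \textbf{what I expect to be the hardest step}, is showing that no atoms can sit at interior or lateral points of the cone other than $0$ and $\infty$. At any point $x_0\in\overline{\Sigma}_D\setminus\{0,\infty\}$, the embedding $\mathcal{D}_p\hookrightarrow L_q(r^{(\sigma-1)q})$ is \emph{locally subcritical} in a suitable rescaled sense: because $0<\sigma<1$ makes $q<p^*$, and because away from $0$ and $\infty$ the weight $r^{(\sigma-1)q}$ is bounded above and below, the local embedding near such $x_0$ is \emph{compact} by the Rellich--Kondrachov theorem. Hence no concentration can occur at $x_0$, so $\mu_{x_0}=0$. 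To make this rigorous near a \emph{boundary} point of the cone one uses the strict Lipschitz regularity of $\partial D$ to get the extension/compactness locally; the scale-invariance argument then collapses all surviving atoms onto the two dilation-fixed points, giving exactly $\mu_0\delta_0+\mu_\infty\delta_\infty$. Collecting Steps 1--3 yields the stated decomposition, completing the proof.
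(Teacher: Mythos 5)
Your overall strategy coincides with the one the paper relies on (the paper defers this proof to \cite{Nazarov2006}, and proves the analogous $\sigma=1$ statement as Proposition~\ref{prop:lemma_2.2} in the Appendix): localize the Hardy--Sobolev inequality \eqref{eq:HardiSobolevCone} with cutoffs, pass to limit measures, and use the subcriticality $q=\frac{np}{n-\sigma p}<p^*$ together with the two-sided boundedness of the weight $r^{(\sigma-1)q}$ away from $0$ and $\infty$ to force all atoms onto the two dilation-fixed points. Your Step~3 is precisely the ingredient that distinguishes the case $0<\sigma<1$ from the Sobolev case, and your argument for it (Rellich--Kondrachov on bounded Lipschitz annular sectors of the cone) is correct.

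Step~2, however, contains a genuine imprecision. The clean reverse-H\"older relation $\left(\int|\varphi|^q\,d\mu\right)^{1/q}\leqslant (S^\sigma_{\Sigma_D})^{-1/p}\left(\int|\varphi|^p\,d\widetilde{\mu}\right)^{1/p}$ between the \emph{full} limit measures is false in general when the weak limit $u$ is nonzero: take $u_k=u$ fixed, with $u$ equal to a nonzero constant on some ball $B\subset\Sigma_D$ away from the origin; then $\mu=|r^{\sigma-1}u|^q\,dx$, $\widetilde{\mu}=|\nabla u|^p\,dx$, and any $\varphi$ supported in $B$ makes the left side positive and the right side zero. The term $\|u_k\nabla\varphi\|_{L_p}$ does not vanish in the limit $k\to\infty$ for fixed $\varphi$; it only becomes negligible after the support of $\nabla\varphi$ shrinks, which suffices for the pointwise atom bounds but not for the global inequality that Lions' lemma requires. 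The standard repair --- and the route taken in Case~2 of the paper's Appendix and in \cite{Lions1985} --- is to set $v_k=u_k-u$, apply the Br\'ezis--Lieb lemma \cite{BrezisLieb} to relate the measures generated by $u_k$ and $v_k$, and run the reverse-H\"older/atomic-decomposition argument for $v_k$ (whose weak limit is zero, so Rellich kills the cross term). In your proof the omission is ultimately harmless, since your Step~3 identifies $\mu-|r^{\sigma-1}u|^q$ as supported in $\{0,\infty\}$ without any appeal to Lions' lemma, and the lower bound on $\widetilde{\mu}$ at the two possible atoms follows from the shrinking-cutoff computation you describe together with $\widetilde{\mu}\geqslant|\nabla u|^p$ and orthogonality; but as literally written, Step~2 needs the Br\'ezis--Lieb splitting to be rigorous.
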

From this proposition we obtain the attainability similar to Theorem 3.1 in \cite{Nazarov2006}.
\begin{theorem}
\label{Th1_Nazarov}
For $0<\sigma<1$ the exact constant $(S^\sigma_{\Sigma_D})^{1/p}$ in the inequality \eqref{eq:HardiSobolevCone} is attained.
\end{theorem}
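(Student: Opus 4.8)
The plan is to run the standard concentration--compactness scheme of Lions, in the form adapted to cones in \cite{Nazarov2006}, using Proposition~\ref{proposition:lemma_3_1_Nazarov} to control the only two places where compactness can fail: the vertex $0$ and the point at infinity $\infty$.

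First I would normalize a minimizing sequence so that $\|r^{\sigma-1}u_k\|_{q,\Sigma_D}=1$ and $\|\nabla u_k\|_{p,\Sigma_D}^p\to S^\sigma_{\Sigma_D}$; in particular $u_k$ is bounded in $\mathcal{D}_p(\Sigma_D)$ and, as already noted, may be taken with bounded supports. A direct change of variables shows that the quotient $Q^\sigma_{\Sigma_D}$ is invariant under the dilations $u\mapsto u(\lambda\,\cdot\,)$ --- this is exactly why the Hardy weight $r^{\sigma-1}$ restores scale invariance in the subcritical range. I would exploit this freedom to fix the scale: choosing $\lambda_k$ so that $\int_{\{|x|<1\}}|r^{\sigma-1}u_k|^q\,dx=\tfrac12$ (possible by the intermediate value theorem, since this integral runs continuously from $0$ to $1$), I replace $u_k$ by $u_k(\lambda_k\,\cdot\,)$, which is still minimizing with bounded support. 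This renormalization pins half of the mass inside the unit ball and half outside it.

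Then I would apply Proposition~\ref{proposition:lemma_3_1_Nazarov} to pass to a weak limit $u$ and to the limiting measures, writing $\theta=\|r^{\sigma-1}u\|_{q,\Sigma_D}^q$. Taking total masses on the compact space $\overline\Sigma_D$ gives $\theta+\mu_0+\mu_\infty=1$ and, from the gradient measure together with the definition of $S^\sigma_{\Sigma_D}$,
\[
S^\sigma_{\Sigma_D}\;\geq\;\|\nabla u\|_{p,\Sigma_D}^p+S^\sigma_{\Sigma_D}\bigl(\mu_0^{p/q}+\mu_\infty^{p/q}\bigr)\;\geq\;S^\sigma_{\Sigma_D}\bigl(\theta^{p/q}+\mu_0^{p/q}+\mu_\infty^{p/q}\bigr),
\]
so that $\theta^{p/q}+\mu_0^{p/q}+\mu_\infty^{p/q}\le 1$. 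Since $q>p$, for $a\in[0,1]$ one has $a^{p/q}\ge a$ with equality only for $a\in\{0,1\}$; hence the constraint $\theta+\mu_0+\mu_\infty=1$ forces $\theta^{p/q}+\mu_0^{p/q}+\mu_\infty^{p/q}\ge 1$, and comparison with the previous bound forces exactly one of $\theta,\mu_0,\mu_\infty$ to equal $1$ and the others to vanish.

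The main obstacle, as always in such problems, is to exclude the two concentration alternatives $\mu_0=1$ and $\mu_\infty=1$; this is precisely where the scale renormalization pays off. If $\mu_\infty=1$ then all mass would escape to infinity, contradicting $\int_{\{|x|<1\}}|r^{\sigma-1}u_k|^q\,dx=\tfrac12$ (note $\mu$ carries no atom on the sphere $\{|x|=1\}$, so the mass inside the unit ball passes to the limit), and symmetrically $\mu_0=1$ is incompatible with half the mass lying outside the unit ball. Therefore $\theta=1$, i.e. $u\ne0$ and $\|r^{\sigma-1}u\|_{q,\Sigma_D}=1$. Weak lower semicontinuity of $\|\nabla\,\cdot\,\|_p$ then gives $\|\nabla u\|_{p,\Sigma_D}^p\le S^\sigma_{\Sigma_D}$, while the definition of $S^\sigma_{\Sigma_D}$ gives the reverse inequality; hence $u$ realizes the infimum and the exact constant is attained. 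The points to check with care are the mass-conservation identity on $\overline\Sigma_D$ (no loss of mass at interior points, guaranteed by the subcriticality $q<p^*$ built into Proposition~\ref{proposition:lemma_3_1_Nazarov}) and the absence of a limiting atom on $\{|x|=1\}$ used in the exclusion step.
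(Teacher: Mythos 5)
Your proposal is correct and follows essentially the same route as the paper, which omits the details precisely because they coincide with the argument of \cite{Nazarov2006}: a dilation-normalized minimizing sequence (half of the weighted mass pinned in the unit ball), Proposition~\ref{proposition:lemma_3_1_Nazarov} to reduce the loss of compactness to atoms at $0$ and $\infty$, the strict concavity of $t\mapsto t^{p/q}$ to force a single nonzero term, and the mass-splitting normalization to exclude both atoms. This is also the same scheme the paper carries out explicitly in the proof of Theorem~\ref{theorem1} for $\sigma=1$, so no gap to report.
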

We omit the proofs, since they do not differ from the proofs given in \cite{Nazarov2006} in any significant way.

\subsection{Sobolev case $\sigma=1$}

In this subsection we assume $\sigma=1$, $q=p^*=\frac{np}{n-p}$. Define the Sobolev quotient
\begin{equation}\label{eq:functional_Q}
Q_{\Sigma_D}(u) \equiv Q^1_{\Sigma_D}(u) = \dfrac{\|\nabla u\|_{L_p(\Sigma_D)}^p}{\|u\|_{L_q(\Sigma_D)}^p}, \qquad u\in \mathcal{D}_{p}(\Sigma_D), \quad u\neq 0.
\end{equation}
We aim to study the attainability of
\[
S_{\Sigma_D} \equiv S_{\Sigma_D}^1 =\inf\limits_{u\in\mathcal{D}_{p}(\Sigma_D)\setminus\{0\}} Q_{\Sigma_D}(u).
\]

If $D$ has a smaller area than a half-sphere, $|D| < |\mathbb{S}^{n-1}_+|$,
then by substituting the radial function \eqref{eq:w_def} with $\sigma=1$ into \eqref{eq:functional_Q} we obtain
\begin{equation*}
S_{\Sigma_D} \leqslant Q_{\Sigma_D}(w) = \left(\dfrac{|\Sigma_D|}{|\mathbb{S}^{n-1}|}\right)^{1-p/q} Q_{\mathbb{R}^n}(w) < 2^{-\frac{p}{n}} S_{\mathbb{R}^n} = S_{\mathbb{R}^n_+}.
\end{equation*}
In particular, this is clear for $D \subsetneq \mathbb{S}^{n-1}_+$. 

In some cases the inequality
\begin{equation}\label{eq:S_less_than_half}
S_{\Sigma_D} < S_{\mathbb{R}^n_+}
\end{equation}
holds for larger cones as well. The following lemma follows from the estimates and techniques provided in \cite{Demyanov2006}.
\begin{lemma}\label{lemma1}
If $p\leqslant \frac{n+1}{2}$, the boundary $\partial D$ is $\mathcal{C}^2$-smooth  and has a point with positive mean curvature, then \eqref{eq:S_less_than_half} holds.
\end{lemma}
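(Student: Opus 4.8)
The plan is to exhibit a single test function whose Rayleigh quotient \eqref{eq:functional_Q} lies strictly below $S_{\mathbb{R}^n_+}$, by concentrating the half-space extremal near a boundary point of the cone where the curvature helps. Recall that $S_{\mathbb{R}^n_+}=2^{-p/n}S_{\mathbb{R}^n}$ is attained by one half of the whole-space Talenti--Bliss bubble \eqref{eq:w_def} (with $\sigma=1$), which meets the flat boundary orthogonally and therefore solves the Neumann problem on $\mathbb{R}^n_+$. Let $x_0\in\partial D$ be a point where the mean curvature of $\partial D$ in $\mathbb{S}^{n-1}$ is positive; then along the ray $\{tx_0:t>0\}$ the conical boundary $\partial\Sigma_D$ inherits positive mean curvature at every finite distance from the apex. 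I would fix a base point $x_*=t_0x_0$ on this ray and build a family $u_\varepsilon$ obtained by transplanting the rescaled half-space bubble, concentrated at scale $\varepsilon$ near $x_*$, into $\Sigma_D$ via boundary (Fermi) coordinates, multiplied by a fixed cut-off supported in a small ball $B(x_*,\rho)$. This is precisely the construction underlying the estimates of \cite{Demyanov2006}.

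The computation then reduces to an asymptotic expansion of numerator and denominator of $Q_{\Sigma_D}(u_\varepsilon)$ as $\varepsilon\to0$. Writing $\partial\Sigma_D$ in Fermi coordinates around $x_*$, its deviation from the tangent half-space is governed at second order by the second fundamental form, so the mean curvature $H(x_*)>0$ enters both through the Jacobian of the coordinate change and through the shape of the integration domain. The leading order of the quotient reproduces $S_{\mathbb{R}^n_+}$, while the first correction is a curvature term of the form $-cH(x_*)\varepsilon^{\alpha}+o(\varepsilon^{\alpha})$ with $c>0$ and $\alpha>0$ depending on $n$ and $p$; the contributions of the cut-off and of the higher-order geometry of $\partial\Sigma_D$ are of strictly smaller order. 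Since $H(x_*)>0$, this yields $Q_{\Sigma_D}(u_\varepsilon)<S_{\mathbb{R}^n_+}$ for $\varepsilon$ small, hence \eqref{eq:S_less_than_half}.

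The hypothesis $p\leqslant\frac{n+1}{2}$ is exactly what guarantees that the curvature correction is the genuine leading-order perturbation: in this range the curvature integral produced by the expansion is finite (possibly up to a logarithmic factor at the endpoint) and of strictly larger order than the remainder coming from the tail of the bubble and from the nonlinearity of the $p$-Laplacian. The main obstacle is therefore this delicate expansion. Handling the non-quadratic gradient functional $\|\nabla u_\varepsilon\|_p^p$ requires expanding $|\nabla u_\varepsilon|^p$ pointwise rather than a bilinear form, and one must isolate the curvature-dependent terms while showing uniformly that all error terms --- boundary, cut-off and nonlinear --- are of lower order in $\varepsilon$. All of these estimates are carried out in \cite{Demyanov2006}, from which the lemma follows.
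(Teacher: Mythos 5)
Your proposal is correct and follows essentially the same route as the paper: the paper's proof simply invokes the estimates (formulas (22), (23), (53)) and the scheme of Theorem 3.1 in \cite{Demyanov2006}, which consist precisely of the construction you describe --- transplanting concentrated half-space bubbles near a boundary point of positive mean curvature and extracting the curvature correction, with the condition $p\leqslant\frac{n+1}{2}$ ensuring this correction dominates the error terms. Your write-up is a faithful narrative of the argument the paper cites, so there is nothing to add.
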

\begin{proof}
All the necessary estimates are given in formulas (22), (23), (53) of the paper \cite{Demyanov2006}. Follow the scheme of \cite[Theorem 3.1]{Demyanov2006} to derive the assertion of this lemma from those estimates.
\end{proof}

\begin{remark}
For $p=2$ (and $n\geqslant 3$) any $D$ with $\partial D\in \mathcal{C}^2$ and $|D| < |\mathbb{S}^{n-1}_+|$ is covered by the result of Lemma \ref{lemma1}. But for general $1<p<n$ the condition $|D| < |\mathbb{S}^{n-1}_+|$ and Lemma \ref{lemma1} give different classes of domains satisfying \eqref{eq:S_less_than_half}.

Note also, that in \cite{ClappPacella2019, Ciraolo2024} the class of domains $D$ providing the inequality \eqref{eq:S_less_than_half} (see \cite[Definition 2.6]{ClappPacella2019}) is slightly more restricted.
\end{remark}

The main result of this subsection is formulated in the following theorem.

\begin{theorem}\label{theorem1}
If the boundary $\partial D \in \mathcal{C}^1$, and \eqref{eq:S_less_than_half} holds, then $S_{\Sigma_D}$ is attained.
\end{theorem}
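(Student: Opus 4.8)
The plan is to prove attainability via the concentration-compactness method, exactly in the spirit of Proposition~\ref{proposition:lemma_3_1_Nazarov} and its use in the subcritical case, but now handling the critical exponent $q=p^*$ where compactness can fail. First I would take a minimizing sequence $u_k$ for $Q_{\Sigma_D}$, normalized so that $\|u_k\|_{q,\Sigma_D}=1$ and $\|\nabla u_k\|_{p,\Sigma_D}^p\to S_{\Sigma_D}$. This sequence is bounded in $\mathcal{D}_p(\Sigma_D)$, so after passing to a subsequence I may invoke Proposition~\ref{proposition:lemma_3_1_Nazarov} (with $\sigma=1$) on the compactified cone $\overline{\Sigma_D}$: there is a weak limit $u$, concentration masses $\mu_0,\mu_\infty\ge 0$ at the vertex and at infinity, and the two weak-$*$ measure convergences stated there.

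The heart of the argument is to rule out concentration, i.e.\ to show $\mu_0=\mu_\infty=0$. Testing the lower bound for $|\nabla u_k|^p$ against the whole cone and using $\|u_k\|_q=1$ gives, in the limit,
\begin{equation*}
S_{\Sigma_D} = \lim_k \|\nabla u_k\|_p^p \ge \|\nabla u\|_p^p + S_{\Sigma_D}\,\mu_0^{p/q} + S_{\Sigma_D}\,\mu_\infty^{p/q},
\end{equation*}
while the mass decomposition yields $1 = \|u\|_q^q + \mu_0 + \mu_\infty$. Since $\|\nabla u\|_p^p \ge S_{\Sigma_D}\|u\|_q^p = S_{\Sigma_D}\bigl(1-\mu_0-\mu_\infty\bigr)^{p/q}$, combining these gives the scalar inequality
\begin{equation*}
1 \ge (1-\mu_0-\mu_\infty)^{p/q} + \mu_0^{p/q} + \mu_\infty^{p/q}.
\end{equation*}
Because $p/q<1$, the function $t\mapsto t^{p/q}$ is strictly subadditive, so this inequality forces at most one of the three masses to be nonzero. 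The possibility that all mass concentrates at a single point (the ``dichotomy at a point'' alternative $\mu_0=1$ or $\mu_\infty=1$) is the crucial case to exclude, and this is precisely where the hypothesis \eqref{eq:S_less_than_half} enters.

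The main obstacle, then, is excluding pure concentration at the vertex $0$ or at $\infty$. The idea is that local concentration is a \emph{local} phenomenon governed by the geometry of $\Sigma_D$ near the concentration point: a blow-up (or blow-down) rescaling $u_k(x)\mapsto t_k^{(n-p)/p}u_k(t_k x)$ centered at the concentration point converges, after flattening the boundary $\partial\Sigma_D$ (which is $\mathcal{C}^1$ by hypothesis), to a minimizer supported in a half-space $\mathbb{R}^n_+$, because near any boundary point the cone looks like a half-space and near an interior point like all of $\mathbb{R}^n$. Hence concentration would force the concentrated energy to be at least $S_{\mathbb{R}^n_+}$ (or $S_{\mathbb{R}^n}>S_{\mathbb{R}^n_+}$ for an interior point), giving $S_{\Sigma_D}\ge S_{\mathbb{R}^n_+}$ and contradicting \eqref{eq:S_less_than_half}. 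I expect the delicate points to be: justifying the rescaling limit with only $\mathcal{C}^1$ regularity of $\partial D$ (so the flattening diffeomorphism is merely $\mathcal{C}^1$, and one must track how it distorts the $L^p$ norm of the gradient, controlling the error as the scale shrinks), and verifying that $S_{\mathbb{R}^n_+}=2^{-p/n}S_{\mathbb{R}^n}$ is indeed the sharp concentration threshold at a boundary point by the half-space reflection/symmetrization argument. Once both masses are excluded, $\mu_0=\mu_\infty=0$, so $\|u\|_q=1$ and $\|\nabla u\|_p^p\le S_{\Sigma_D}$; together with $\|\nabla u\|_p^p\ge S_{\Sigma_D}\|u\|_q^p=S_{\Sigma_D}$ this shows $u$ attains the infimum, completing the proof.
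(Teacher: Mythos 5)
There are two genuine gaps here. First, you invoke Proposition~\ref{proposition:lemma_3_1_Nazarov} ``with $\sigma=1$'', but that proposition is specific to the subcritical case $0<\sigma<1$: there the Hardy weight $r^{(\sigma-1)q}$ together with $q<p^*$ makes the embedding locally compact away from the vertex and infinity, so concentration can occur only at $0$ and $\infty$. For $\sigma=1$ (i.e. $q=p^*$) the statement is simply false: a Talenti bubble shrinking to any point $x_0\in\overline\Sigma_D$ produces $|u_k|^q\rightharpoondown c\,\delta_{x_0}$, which is not of the form $|u|^q+\mu_0\delta_0+\mu_\infty\delta_\infty$. This is precisely why the paper proves a separate Lions-type statement, Proposition~\ref{prop:lemma_2.2}, admitting an at most countable set of concentration points $x_j\in\overline\Sigma_D$. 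Your subadditivity computation itself is fine and carries over verbatim to that setting, and your exclusion of concentration at an interior point or at a smooth boundary point (comparison with $S_{\mathbb{R}^n}$, resp. $S_{\mathbb{R}^n_+}$, against \eqref{eq:S_less_than_half}) is exactly the paper's argument for such points.

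The second gap is fatal: the cases you single out as ``the crucial ones to exclude'' --- pure concentration at the vertex $0$ or at $\infty$ --- cannot be handled by your blow-up argument. Your claim that ``near any boundary point the cone looks like a half-space'' fails exactly at those two points: $\Sigma_D$ is invariant under dilations centered at the vertex, so the rescaling $t_k^{(n-p)/p}u_k(t_kx)$ maps $\Sigma_D$ onto itself and the blow-up (or blow-down) limit problem is again the problem in $\Sigma_D$. Concentration at $0$ or $\infty$ therefore yields only the tautology $S_{\Sigma_D}\geqslant S_{\Sigma_D}$, and no contradiction with \eqref{eq:S_less_than_half} can be extracted. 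The paper sidesteps this by a normalization you did not make: since $Q_{\Sigma_D}(au(bx))=Q_{\Sigma_D}(u)$, one may assume $\|u_k\|_{L_q(\Sigma_D)}=1$ and $\|u_k\|_{L_q(B(0,1)\cap\Sigma_D)}=\tfrac12$. If the whole mass then concentrates at a single point $x_0$, the half/half split forces $x_0\in\partial B(0,1)$, hence $x_0\neq 0,\infty$; only then does the dichotomy ``whole space at an interior point, half-space at a $\mathcal{C}^1$ boundary point'' apply, and \eqref{eq:S_less_than_half} gives the contradiction. Without this scaling device (or an equivalent one), your argument cannot rule out concentration at the two scale-invariant points, and the proof does not close.
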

\begin{proof}

We first need the standard auxiliary concentration--compactness statement similar to Proposition \ref{proposition:lemma_3_1_Nazarov}. The corresponding Proposition \ref{prop:lemma_2.2} is proved in the Appendix.

Consider a minimizing sequence $u_k$ for the functional \eqref{eq:functional_Q}. Similar to the previous subsection, we consider them functions on a compactified cone $\overline \Sigma_D$. Let's make certain transformations with them. Since $Q_{\Sigma_D}(a u(b x)) = Q_{\Sigma_D}(u(x))$ for all $a, b > 0$, we can assume without loss of generality that $u_k$ are normalized and keep exactly half of their mass in the unit ball:
\begin{equation*}\label{eq:u_k_normalize}
\|u_k\|_{L_q(\Sigma_D)} = 1, \qquad
\|u_k\|_{L_q(B(0,1)\cap\Sigma_D)} = \frac{1}{2}.
\end{equation*}



Without loss of generality $u_k$ satisfy the assumptions of Proposition \ref{prop:lemma_2.2}, therefore, using the notations of Proposition \ref{prop:lemma_2.2}, we have
\begin{align*}
\left( \,\int\limits_{\overline \Sigma_D} |u|^q\, dx \right)^{p/q} &+ \sum_{j \in J} \mu_j^{p/q} \leqslant \dfrac{1}{S_{\Sigma_D}} \left(\, \int\limits_{\overline \Sigma_D} |\nabla u|^p\, dx + \sum_{j \in J} S_{\Sigma_D} \mu_j^{p/q} \right) \\
&{} \leqslant \dfrac{1}{S_{\Sigma_D}} \int\limits_{\overline \Sigma_D} d\tilde{\mu} = \left(\,\int\limits_{\overline \Sigma_D} d\mu\right)^{p/q} = \left(\,\int\limits_{\overline \Sigma_D} |u|^q \, dx + \sum_{j \in J} \mu_j\right)^{p/q}.
\end{align*}
Since $\frac{p}{q} < 1$, only one of the terms could be non-zero. If it's $u$, then $u_k$ is relatively compact and its limit $u$ is the minimizer. Otherwise, it's exactly one of $\mu_j$, and 
\begin{equation*}\label{th3_singleton}
|u_k|^q \rightharpoondown \delta_{x_0}, \quad |\nabla u_k|^p \rightharpoondown S_{\Sigma_D}\delta_{x_0}
\end{equation*}
holds for some $x_0$. Since we split the mass between $B(0,1)$ and it's complement, $x_0$ must be on the boundary of $B(0,1)$. If $x_0$ is inside $\Sigma_D$, then $u_k$ cannot give a better constant than $S_{\mathbb{R}^n}$. 
If $x_0\in \partial \Sigma_D \cap \partial B(0,1)$, then due to the smoothness of $\partial D$, $u_k$ cannot give a better constant than $S_{\mathbb{R}_+^n}$. 
In both cases it cannot give $S_{\Sigma_D}$, which is lesser due to \eqref{eq:S_less_than_half}. Thus, the proof is concluded.
\end{proof}

\section{Symmetry breaking}
Now we return to the general case $0<\sigma\leqslant 1$. Recall from \eqref{eq:functional_Q_Hardy_Sobolev} the Hardy--Sobolev quotient 
\begin{equation*}\label{eq:functional_Q_Hardy}
Q_{\Sigma_D}^\sigma(u) = \dfrac{\|\nabla u\|_{L_p(\Sigma_D)}^p}{\|r^{(\sigma-1)}u\|_{L_q(\Sigma_D)}^p}, \qquad u\in \mathcal{D}_{p}(\Sigma_D), \quad u\neq 0,
\end{equation*}
\[
S_{\Sigma_D}^\sigma = \inf\limits_{u\in\mathcal{D}_{p}(\Sigma_D)\setminus\{0\}} Q_{\Sigma_D}^\sigma(u).
\]
For $0<\sigma<1$ the constant $S_{\Sigma_D}^\sigma$ is always attained due to Theorem \ref{Th1_Nazarov}.
For $\sigma=1$ we assume \eqref{eq:S_less_than_half} and $\partial D \in \mathcal{C}^1$ to have attainability due to Theorem \ref{theorem1}.

Consider the functional 
\[
J_{\Sigma_D}^\sigma(u) = \dfrac{1}{p}\int\limits_{\Sigma_D}|\nabla u|^p\,dx-\dfrac{1}{q}\int\limits_{\Sigma_D}|r^{(\sigma-1)}u|^q\,dx, \qquad u\in\mathcal{D}_{p}(\Sigma_D)
\]
together with the associated Nehari manifold
\[
\mathcal{N}^\sigma(\Sigma_D) = \{ u\in\mathcal{D}_{p}(\Sigma_D): u\neq 0, \ \|\nabla u\|_{L_p(\Sigma_D)}^p = \|r^{(\sigma-1)}u\|_{L_q(\Sigma_D)}^q \}.
\]
For any nontrivial $u\in\mathcal{D}_{p}(\Sigma_D)$ there exists $t_u>0$ such that $t_u u\in \mathcal{N}^\sigma(\Sigma_D)$. Then 
\[
J_{\Sigma_D}^\sigma(t_uu) = \left(\dfrac{1}{p}-\dfrac{1}{q}\right)\Big[Q_{\Sigma_D}^\sigma(u)\Big]^{\frac{q}{q-p}}.
\]
Note that 
\[
\dfrac{1}{p}-\dfrac{1}{q} = \dfrac{\sigma}{n}, \quad \frac{q}{q-p} = \dfrac{n}{\sigma p}.
\]
Therefore
\[
\inf\limits_{u\in\mathcal{N}^\sigma(\Sigma_D)} J_{\Sigma_D}^\sigma(u) = \dfrac{\sigma}{n}\Big[S_{\Sigma_D}^\sigma\Big]^{\frac{n}{\sigma p}}.
\]
Consider the problem \eqref{eq:Neumann_problem} with $\lambda=1$:
\begin{equation}\label{eq:main_problem_Hardy}
\begin{cases} 
-\Delta_p u = \dfrac{u^{q-1}}{|x|^{(1-\sigma)q}} &\text{in } \Sigma_D, \\
\dfrac{\partial u}{\partial \nu} = 0 &\text{on } \partial \Sigma_D, \\
u > 0 &\text{in } \Sigma_D.
\end{cases}
\end{equation}
This problem has a family of radial solutions $U = C a^{\frac{n}{p}-1} w(a|x|)$ for a certain $C>0$ and arbitrary $a>0$, where $w$ is defined in \eqref{eq:w_def}.
Multiplying the equation \eqref{eq:main_problem_Hardy} by $U$ and integrating we arrive at
\[
\int\limits_{\Sigma_D}|\nabla U|^p\,dx = \int\limits_{\Sigma_D}|r^{(\sigma-1)}U|^q\,dx,
\]
therefore $U\in\mathcal{N}^\sigma(\Sigma_D)$.

Since $\mathcal{N}^\sigma(\Sigma_D)$ is a manifold of codimension $1$, we can look at the second variation of $J_{\Sigma_D}^\sigma$ at the point $U$, and as long as it has at least $2$ linearly independent negative directions, there must be a better minimizer than $U$. Since the Talenti--Bliss type functions \eqref{eq:w_def} are the minimizers of the functional $Q_{\Sigma_D}^\sigma$ in the space of radial functions, this implies that the global minimizer is not radial. 

The second variation is easy to calculate (see e.g. \cite[Section 3]{Nazarov2004}):
\[
DJ_{\Sigma_D}^\sigma(U; h) = \int\limits_{\Sigma_D}|\nabla U|^{p-2}\langle\nabla U, \nabla h\rangle\,dx-\int\limits_{\Sigma_D}r^{(\sigma-1)q}|U|^{q-2}Uh\,dx,
\]
\begin{align*}
D^2J_{\Sigma_D}^\sigma(U; h, h) = &\int\limits_{\Sigma_D}|\nabla U|^{p-4}((p-2)\langle\nabla U, \nabla h\rangle^2+|\nabla U|^2|\nabla h|^2)\,dx\\
&{}-(q-1)\int\limits_{\Sigma_D}r^{(\sigma-1)q}|U|^{q-2}h^2\,dx.
\end{align*}
Therefore
\[
D^2J_{\Sigma_D}^\sigma(U; U, U) = (p-1)\int\limits_{\Sigma_D}|\nabla U|^p\,dx-(q-1)\int\limits_{\Sigma_D}|r^{(\sigma-1)}U|^q\,dx < 0
\]
due to $q > p$. Now we would like to search for a second negative direction as well. Consider the direction 
\[
h = f(|x|) g(x/|x|),
\]
where $g$ is a function on $D$. We arrive at
\begin{align*}
D^2J_{\Sigma_D}^\sigma(U; h, h) = &\int\limits_{\Sigma_D}|\nabla U|^{p-4}((p-2)\langle\nabla U, \nabla (fg)\rangle^2+|\nabla U|^2|\nabla (fg)|^2)\,dx\\
&{}-(q-1)\int\limits_{\Sigma_D}r^{(\sigma-1)q}|U|^{q-2}f^2g^2\,dx.
\end{align*}
Since $\langle \nabla U, \nabla g\rangle = 0$ and $\nabla U ||\nabla f$, we obtain
\begin{align*}
D^2J_{\Sigma_D}^\sigma(U; h, h) = &\int\limits_{\Sigma_D}|\nabla U|^{p-2}\left((p-1)g^2|\nabla f|^2 + f^2\dfrac{|\nabla_\theta g|^2}{|x|^2}\right)\,dx\\
&{} -(q-1)\int\limits_{\Sigma_D}r^{(\sigma-1)q}|U|^{q-2}f^2g^2\,dx,
\end{align*}
where $\nabla_{\theta}$ is the tangential gradient on $D$. Following the idea of \cite{Ciraolo2024} (see also \cite[Section 3]{Nazarov2004}), we assume that $g$ is the first non-constant Neumann eigenfunciton of the Beltrami--Laplace operator on $D$ and denote by $\lambda_1(D)$ the corresponding eigenvalue. 
If we also assume $\|g\|^2_{L_2(D)}=|D|$, we arrive at
\begin{align*}
D^2J_{\Sigma_D}^\sigma(U; h, h) = &\int\limits_{\Sigma_D} |\nabla U|^{p-2}\left((p-1)|\nabla f|^2 + f^2\dfrac{\lambda_1(D)}{|x|^2}\right)\,dx\\
&{} -(q-1)\int\limits_{\Sigma_D} r^{(\sigma-1)q}|U|^{q-2}f^2\,dx.
\end{align*}
In order for $h$ to be a negative direction, it is sufficient to find $f$, such that
\[
- (p-1) \mathrm{div} (|\nabla U|^{p-2} \nabla f) f - (q-1)r^{(\sigma-1)q}|U|^{q-2}f^2 < -\lambda_1(D) |\nabla U|^{p-2}\dfrac{f^2}{|x|^2}.
\]

\begin{proposition}
Let $U = U(|x|)$ be a radial solution of \eqref{eq:main_problem_Hardy}. Then function 
\[
f(x) = |x|^{\alpha} U'(|x|), \qquad \alpha = (1-\sigma)\dfrac{q}{p} < 1,
\]
solves the equation
\begin{equation}\label{eq:f_diff_eq_Hardy}
- (p-1) \mathrm{div} (|\nabla U|^{p-2} \nabla f) - (q-1)r^{(\sigma-1)q}|U|^{q-2}f = \Lambda^* |\nabla U|^{p-2}\dfrac{f}{|x|^2}
\end{equation}
in $\Sigma_D$, where $\Lambda^* = -(1-\alpha)(n-1-\alpha(p-1)) < 0$.
\end{proposition}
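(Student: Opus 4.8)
The plan is to exploit that both $U$ and the candidate $f=|x|^{\alpha}U'$ are radial, so that \eqref{eq:f_diff_eq_Hardy} collapses to a single ODE identity in $r=|x|$, which can be checked directly from the radial equation for $U$ and its derivatives. First I would record the radial form of the problem. Writing $U=U(r)$ and $a(r)=|\nabla U|^{p-2}=|U'|^{p-2}$, the equation \eqref{eq:main_problem_Hardy} becomes $-(r^{n-1}aU')'=r^{\,n-1+(\sigma-1)q}U^{q-1}$. It is convenient to introduce the $p$-momentum $\psi:=|U'|^{p-2}U'=aU'$, for which $\psi'=(p-1)aU''$, so that the equation reads
\[
\psi'+\frac{n-1}{r}\,\psi=-r^{(\sigma-1)q}U^{q-1}.
\]
Two observations make the computation transparent. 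For radial $f$ the operator in \eqref{eq:f_diff_eq_Hardy} reduces to $-(p-1)\,\mathrm{div}(a\nabla f)=-(p-1)r^{1-n}(r^{n-1}af')'$, so after multiplying \eqref{eq:f_diff_eq_Hardy} by $r^{n-1}$ it becomes an identity in $\psi,\psi',\psi''$ and the source terms $U^{q-1}$, $U^{q-2}U'$. Moreover the weight on the right simplifies, since $af=r^{\alpha}\psi$, whence the target right-hand side is $\Lambda^*r^{n-3}af=\Lambda^*r^{\,n-3+\alpha}\psi$.

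Next I would substitute $f=r^{\alpha}U'$ into the left-hand side and eliminate $\psi'$ and $\psi''$ using the displayed first-order relation and its derivative. Two cancellations drive the result. The potential term $-(q-1)r^{\,n-1+(\sigma-1)q}U^{q-2}f$ cancels exactly against the $U^{q-2}U'$ contribution produced by differentiating the source (the usual fact that, away from the weight, $U'$ solves the linearized equation). The terms proportional to $U^{q-1}$ combine with coefficient $(\sigma-1)q+\alpha p$, which vanishes precisely because $\alpha=(1-\sigma)q/p$; this is exactly what forces the stated value of $\alpha$ and is the conceptual heart of the statement. After these cancellations only a multiple of $r^{\,n-3+\alpha}\psi=r^{n-3}af$ survives, and collecting its coefficient yields
\[
\Lambda^*=-(p-1)\alpha(n-2+\alpha)-(n-1)+\alpha p(n-1),
\]
which a short algebraic rearrangement identifies with $-(1-\alpha)\bigl(n-1-\alpha(p-1)\bigr)$.

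The main obstacle is purely the bookkeeping in the elimination of $\psi'$ and $\psi''$: one must use the first-order relation twice (once to remove $\psi''$ via its derivative, once more to remove the residual $\psi'$) and track the various powers of $r$, so that every term except the eigenvalue term cancels. As a guiding check, the case $\sigma=1$ (so $\alpha=0$, $f=U'$) is transparent: there the equation is translation invariant, $\partial_{x_i}U=U'(r)\,x_i/|x|$ solves the linearized equation, its angular factor $x_i/|x|$ is a first spherical harmonic with Laplace--Beltrami eigenvalue $n-1$, and correspondingly $\Lambda^*=-(n-1)$, in agreement with the general formula. Finally I would note that $\alpha<1$ (equivalently $n>p$) and $n-1-\alpha(p-1)>n-p>0$, which give $\Lambda^*<0$ as claimed; all manipulations are justified since $w$, and hence $U$, is smooth with $U'\neq0$ on $(0,\infty)$.
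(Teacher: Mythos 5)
Your proposal is correct, and it reaches the same two conditions as the paper: the exponent $\alpha=(1-\sigma)q/p$ forced by the vanishing of the $U^{q-1}$ terms, and your surviving coefficient $\Lambda^*=-(p-1)\alpha(n-2+\alpha)-(n-1)+\alpha p(n-1)$, which indeed expands to the paper's $-(1-\alpha)(n-1-\alpha(p-1))$ (both equal $-(n-1)+(p+n-2)\alpha-(p-1)\alpha^2$). The overall strategy coincides with the paper's — verify the radial ODE by substitution, using the equation \eqref{eq:main_problem_Hardy} for $U$ to eliminate the nonlinear term — but your bookkeeping is genuinely different and leaner. The paper works at the level of $U$ itself: it expands $\mathrm{div}(W\nabla f)$ and $(\Delta_p U)'$ with $W=|\nabla U|^{p-2}$, so that $U'''$, $W'$ and $W''$ all appear, and the reduction requires the algebraic identities $W'U'=(p-2)WU''$ and $W''U'=(p-2)WU'''+(p-3)W'U''$. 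You instead pass to the flux $\psi=|U'|^{p-2}U'$, for which the radial equation is first order, $\psi'+\frac{n-1}{r}\psi=-r^{(\sigma-1)q}U^{q-1}$; then $\psi''$ is removed by differentiating this relation and the residual $\psi'$ by using it once more, so third derivatives of $U$ and derivatives of $W$ never enter. This buys conceptual clarity: the cancellation of the $U^{q-2}U'$ terms (the fact that, modulo the weight, $U'$ solves the linearized equation) and the coefficient $(\sigma-1)q+\alpha p$ on the $U^{q-1}$ terms are visible as two separate mechanisms, whereas in the paper they emerge only after the final grouping. Your closing observations — the $\sigma=1$ sanity check ($\alpha=0$, $f=U'$, $\Lambda^*=-(n-1)$, consistent with translation invariance and the first spherical harmonic), and the verification that $\alpha<1$ and $n-1-\alpha(p-1)>n-p>0$ give $\Lambda^*<0$ — are correct and slightly more explicit than what the paper records.
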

\begin{proof}
For the ordinary Laplacian $\Delta$ the following relation holds:
\[
(\Delta U)' = \left(U''+\dfrac{n-1}{r}U'\right)'=U'''+\dfrac{n-1}{r}U'' - \dfrac{n-1}{r^2}U'. 
\]
By the definition of $f$ we write the following derivatives:
\[
f' = r^{\alpha} U'' + \alpha r^{\alpha-1} U', \quad f'' = r^{\alpha} U''' + 2 \alpha r^{\alpha-1} U'' + \alpha(\alpha-1)r^{\alpha-2} U'.
\]
\[
\Delta f = f''+\dfrac{n-1}{r}f'=r^{\alpha} U''' + (n-1+2 \alpha) r^{\alpha-1} U'' + \alpha(n-2+\alpha)r^{\alpha-2} U'.
\]
Denote $W = |\nabla U|^{p-2}$, so we can rewrite:
\begin{align*}
-(p-1)\mathrm{div} (|\nabla U|^{p-2} \nabla f) &= -(p-1)\Big(W \Delta f + \nabla W\cdot\nabla f\Big) \\
{} &= -(p-1)\Big(W \Delta f + W' f'\Big) \\
&= -(p-1)\Big(r^{\alpha} W U''' + (n-1+2 \alpha) r^{\alpha-1} W U''\\
{} &\quad+ \alpha(n-2+\alpha)r^{\alpha-2} W U' + r^{\alpha} W' U'' + \alpha r^{\alpha-1} W' U'\Big), \\
-(q-1)r^{(\sigma-1)q}|U|^{q-2}f &= -(q-1)r^{(\sigma-1)q+\alpha}|U|^{q-2}U'\\
& = -r^\alpha (r^{(\sigma-1)q} U^{q-1})' - (1-\sigma)q r^{\alpha-1} (r^{(\sigma-1)q} U^{q-1})\\
{} &= r^\alpha (\Delta_p U)' + (1-\sigma)q r^{\alpha-1}\Delta_p U\\
&= r^\alpha (W\Delta U + \nabla W\cdot \nabla U)' \\
{} &\quad+ (1-\sigma)q r^{\alpha-1} (W\Delta U + \nabla W\cdot \nabla U) \\
{} &= r^\alpha W'U''+(n-1)r^{\alpha-1} W'U' +r^\alpha WU''' \\
 {} &\quad+(n-1)r^{\alpha-1} WU'' - (n-1) r^{\alpha-2} WU'\\
{} &\quad + r^\alpha W''U' + r^\alpha W'U'' + (1-\sigma)q r^{\alpha-1} W U''\\
{} &\quad+ (n-1)(1-\sigma)q r^{\alpha-2} W U' + (1-\sigma)q r^{\alpha-1} W'U', \\
\Lambda^* |\nabla U|^{p-2}\dfrac{f}{|x|^2} &= \Lambda^* r^{\alpha-2} W U'.
\end{align*}
Substituting these relations into \eqref{eq:f_diff_eq_Hardy} and grouping repeating terms, we get
\begin{align*}
&-(p-2)r^\alpha WU''' + \left[-(p-1) (n-1+2\alpha) + n-1 + (1-\sigma)q\right] r^{\alpha-1} W U''\\
&+ \left[ -(p-1) \alpha (n-2+\alpha) + (n-1)(1-\sigma)q -\Lambda^* - (n-1)\right] r^{\alpha-2} W U'\\
{} &- (p-3)r^\alpha W'U'' + \left[-(p-1)\alpha + n - 1 + (1-\sigma)q\right]r^{\alpha-1} W'U' + r^\alpha W''U' = 0.
\end{align*}
By the definition of $W$ we can write 
\begin{align*}
W'U' &= (p-2) W U'', \\
W''U' &= (p-2)WU'''+(p-3)W'U'',
\end{align*}
so we are left with
\begin{align*}
&\left[-p\alpha + (1-\sigma)q\right] r^{\alpha-1} W U''\\
&+ \left[ -(p-1) \alpha (n-2+\alpha) + (n-1)(1-\sigma)q -\Lambda^* - (n-1)\right] r^{\alpha-2} W U' = 0.
\end{align*}
This holds when
\begin{align*}
\alpha &= (1-\sigma)\dfrac{q}{p},\\
\Lambda^* &= -(1-\alpha)(n-1-\alpha(p-1)).
\end{align*}
Therefore, this completes the proof.
\end{proof}

Thus, we found the necessary function $f$, and it provides us a negative non-radial direction $h$ when $\lambda_1(D) < -\Lambda^*$. This gives the following statement.

\begin{theorem}\label{theorem4}
1. Let $0<\sigma<1$, and $\lambda_1(D) < (1-\alpha)(n-1-\alpha(p-1))$, where $\alpha = (1-\sigma)\dfrac{q}{p}$. Then $S_{\Sigma_D}^\sigma$ is attained at a non-radial function.

2. Let $\sigma=1$. Assume that $\partial D\in \mathcal {C}^1$, $S_{\Sigma_D}<S_{\mathbb{R}^n_+}$, and $\lambda_1(D) <n-1$. Then $S_{\Sigma_D}$ is attained at a non-radial function.
\end{theorem}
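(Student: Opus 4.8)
The plan is to combine the attainability results of Section~2 with the second-variation computation assembled above, and to show that the radial solution $U$ of \eqref{eq:main_problem_Hardy} cannot be the global minimizer. By Theorem~\ref{Th1_Nazarov} (for $0<\sigma<1$) and Theorem~\ref{theorem1} (for $\sigma=1$, where we invoke the hypotheses $\partial D\in\mathcal{C}^1$ and \eqref{eq:S_less_than_half}), the constant $S_{\Sigma_D}^\sigma$ is attained by some $u_*\in\mathcal{D}_{p}(\Sigma_D)$. Minimizing $Q_{\Sigma_D}^\sigma$ is equivalent to minimizing $J_{\Sigma_D}^\sigma$ over the Nehari manifold $\mathcal{N}^\sigma(\Sigma_D)$, via the correspondence $u\mapsto t_uu$ recorded above, and among radial competitors the infimum of $Q_{\Sigma_D}^\sigma$ is achieved exactly at the Talenti--Bliss profile \eqref{eq:w_def}, i.e.\ at $U$. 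It therefore suffices to exhibit a direction along which $J_{\Sigma_D}^\sigma$ strictly decreases off $U$ on $\mathcal{N}^\sigma(\Sigma_D)$: this forces $S_{\Sigma_D}^\sigma<Q_{\Sigma_D}^\sigma(U)$, so the already-existing minimizer $u_*$ must be non-radial.

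First I would observe that $U$ is in fact a \emph{free} critical point of $J_{\Sigma_D}^\sigma$ on all of $\mathcal{D}_{p}(\Sigma_D)$: testing the weak (Neumann) formulation of \eqref{eq:main_problem_Hardy} against an arbitrary $\phi$ gives $DJ_{\Sigma_D}^\sigma(U;\phi)=0$. Consequently the Lagrange multiplier attached to the Nehari constraint vanishes, and the second variation of $J_{\Sigma_D}^\sigma$ \emph{restricted} to $\mathcal{N}^\sigma(\Sigma_D)$ is simply the restriction of the quadratic form $D^2J_{\Sigma_D}^\sigma(U;\cdot,\cdot)$ to the tangent space $T_U\mathcal{N}^\sigma(\Sigma_D)$, a subspace of codimension one. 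By the interlacing property of quadratic forms, if $D^2J_{\Sigma_D}^\sigma(U;\cdot,\cdot)$ admits two linearly independent negative directions on the full space, then its restriction to any codimension-one subspace still admits at least one; hence $U$ is not a local minimizer on $\mathcal{N}^\sigma(\Sigma_D)$, and we are done.

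It therefore remains to produce two linearly independent negative directions. The first is the radial direction $U$ itself, for which the computation above gives $D^2J_{\Sigma_D}^\sigma(U;U,U)=(p-q)\,\|\nabla U\|_{L_p(\Sigma_D)}^p<0$. The second is the non-radial direction $h=f(|x|)\,g(x/|x|)$, where $g$ is the first non-constant Neumann eigenfunction on $D$ (normalized so that $\|g\|^2_{L_2(D)}=|D|$) and $f(x)=|x|^{\alpha}U'(|x|)$ is the function supplied by the preceding Proposition. Substituting \eqref{eq:f_diff_eq_Hardy} into the reduced expression for $D^2J_{\Sigma_D}^\sigma(U;h,h)$ and integrating the gradient term by parts (the Neumann condition for $g$ on $\partial D$, together with the radial symmetry of $f$, annihilates all boundary terms) collapses the form to
\[
D^2J_{\Sigma_D}^\sigma(U;h,h)=\big(\Lambda^*+\lambda_1(D)\big)\int\limits_{\Sigma_D}|\nabla U|^{p-2}\frac{f^2}{|x|^2}\,dx,
\]
which is strictly negative precisely when $\lambda_1(D)<-\Lambda^*=(1-\alpha)(n-1-\alpha(p-1))$ --- exactly the hypothesis in both parts (with $\alpha=0$, hence $-\Lambda^*=n-1$, in the Sobolev case $\sigma=1$). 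Linear independence of $U$ and $h$ is immediate since $g$ is non-constant; moreover the cross term vanishes, $D^2J_{\Sigma_D}^\sigma(U;U,h)=0$, because every summand carries the angular factor $g$, whose integral over $D$ is zero by $L_2$-orthogonality to constants. (The same orthogonality shows $h\in T_U\mathcal{N}^\sigma(\Sigma_D)$, so the negative direction already lies in the Nehari tangent space.) Thus $D^2J_{\Sigma_D}^\sigma(U;\cdot,\cdot)$ is negative definite on $\mathrm{span}\{U,h\}$, supplying the two required directions.

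The step I expect to demand the most care is verifying that $h$ is an admissible competitor --- that $h\in\mathcal{D}_{p}(\Sigma_D)$ and that all integrals entering the second variation, in particular $\int_{\Sigma_D}|\nabla U|^{p-2}f^2/|x|^2\,dx$, converge both at the origin and at infinity. Writing the reduced weight as $|U'|^{p}\,|x|^{2\alpha-2}$ isolates the difficulty at the two endpoints, and it reduces to the asymptotics of $w$ and $w'$ from \eqref{eq:w_def}: near $r=0$ one has $w'(r)=O(r^{\gamma-1})$ with $\gamma=\tfrac{\sigma(n-p)p}{(p-1)(n-\sigma p)}>0$, while as $r\to\infty$ the profile decays like $r^{-(n-p)/(p-1)}$, so that $\alpha<1$ together with $p<n$ guarantees an integrable power of $r$ at both ends. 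Once this admissibility and the (obvious) positivity of the reduced integral are confirmed, the sign of $\Lambda^*+\lambda_1(D)$ governs everything, and the argument closes.
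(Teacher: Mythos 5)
Your proposal is correct and follows essentially the same route as the paper: attainability from the theorems of Section~2, then two linearly independent negative directions for $D^2J_{\Sigma_D}^\sigma(U;\cdot,\cdot)$ --- namely $U$ itself and $h=f(|x|)\,g(x/|x|)$ with $f=|x|^\alpha U'$ supplied by the Proposition --- on the codimension-one Nehari manifold, which forces the (existing) minimizer to be non-radial exactly when $\lambda_1(D)<-\Lambda^*=(1-\alpha)(n-1-\alpha(p-1))$. The additional details you spell out (vanishing of the cross term $D^2J_{\Sigma_D}^\sigma(U;U,h)$, tangency of $h$ to the Nehari manifold, and the integrability of $\int_{\Sigma_D}|\nabla U|^{p-2}f^2|x|^{-2}\,dx$ at $0$ and $\infty$) are refinements the paper leaves implicit, not a different argument.
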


\begin{remark}
For $n\geqslant 3$, $\sigma=1$, $p=2$ the second assertion of Theorem \ref{theorem4} was proved in \cite{Ciraolo2024}.
\end{remark}

\begin{remark}
Note that for $\sigma=1$ the assumption $\lambda_1(D) <n-1$ does not depend on $p$. This restriction is sharp, since for $\lambda_1(D) \geqslant n-1$ there are examples of convex cones, which have only radial minimizers due to \cite{Lions1988}.

For the general Hardy--Sobolev case we see that the restriction on $\lambda_1(D)$ depends on $p$. It would be interesting to investigate if it is sharp in that case.
\end{remark}

\begin{remark}
Notice that under the assumptions of Theorem \ref{theorem4} the problem \eqref{eq:main_problem_Hardy} has at least two solutions.
\end{remark}

\section*{Appendix}
\label{appA}

The following proposition is a variant of Lemma 2.2 from \cite{Lions1988} (see also \cite{Lions1985}). Its proof is quite similar, but differs in the special treatment for the $\infty$ point of the compactified cone $\overline \Sigma_D$.

\begin{proposition}
\label{prop:lemma_2.2}
Let $u_k$ be a bounded sequence in $\mathcal{D}_{p}(\Sigma_D)$ with bounded supports. Then, up to a subsequence, $u_k\rightharpoondown u$, and the following weak convergences hold in the space of measures in $\overline \Sigma_D$:
\begin{align}
\label{eq:mu_representation}
|u_k|^q \rightharpoondown \mu &= |u|^q + \sum\limits_{j\in J} \mu_j \delta_{x_j}, \\
\label{eq:mu_tilde_representation}
|\nabla u_k|^p \rightharpoondown\widetilde\mu &\geqslant |\nabla u|^p + \sum\limits_{j\in J} S_{\Sigma_D} \mu_j^{p/q} \delta_{x_j},
\end{align}
where $J$ is an at most countable set, $x_j\in \overline \Sigma_D$ are distinct points, and $\mu_j > 0$.
\end{proposition}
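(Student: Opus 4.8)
The plan is to follow the standard proof of Lions' second concentration--compactness lemma, adapted to the compactified cone $\overline\Sigma_D$, with the point at infinity requiring the extra care the statement alludes to. First I would extract subsequences: since $u_k$ is bounded in $\mathcal{D}_{p}(\Sigma_D)$, up to a subsequence $u_k\rightharpoondown u$ with $\nabla u_k\rightharpoondown\nabla u$ in $L_p$, and $u_k\to u$ a.e.\ by local Rellich compactness; moreover $|u_k|^q$ and $|\nabla u_k|^p$ are bounded sequences of finite measures on the compact space $\overline\Sigma_D$, so by weak-$*$ compactness we may assume $|u_k|^q\rightharpoondown\mu$ and $|\nabla u_k|^p\rightharpoondown\widetilde\mu$. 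The Brezis--Lieb lemma applied to $|u_k|^q$ then yields $\mu=|u|^q+\nu$, where $\nu$ is the weak-$*$ limit of $|v_k|^q$ for $v_k:=u_k-u\rightharpoondown 0$; this reduces the problem of locating the atoms to the sequence $v_k$.

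Second, I would establish the reverse Sobolev inequality for the limiting measures of $v_k$. Testing \eqref{eq:HardiSobolevCone} on $\phi v_k$ for $\phi\in C(\overline\Sigma_D)$ and expanding $\nabla(\phi v_k)=\phi\nabla v_k+v_k\nabla\phi$, the cross term vanishes in the limit because $v_k\to 0$ strongly in $L_p^{\mathrm{loc}}$, leaving
\[
\Big(\int_{\overline\Sigma_D}|\phi|^q\,d\nu\Big)^{p/q}\le S_{\Sigma_D}^{-1}\int_{\overline\Sigma_D}|\phi|^p\,d\widetilde\nu,
\]
where $\widetilde\nu$ is the weak-$*$ limit of $|\nabla v_k|^p$. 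From this inequality the atomic structure follows by Lions' argument: $\nu$ must be a countable sum of atoms $\nu=\sum_{j}\mu_j\delta_{x_j}$ with $\widetilde\nu\ge S_{\Sigma_D}\sum_j\mu_j^{p/q}\delta_{x_j}$, the exponent gap $q>p$ forcing concentration. Finally, for the gradient bound on $\widetilde\mu$ itself, the elementary convexity inequality $|\nabla u+\nabla v_k|^p\ge|\nabla u|^p+p|\nabla u|^{p-2}\langle\nabla u,\nabla v_k\rangle$ together with $\nabla v_k\rightharpoondown 0$ gives the diffuse bound $\widetilde\mu\ge|\nabla u|^p$, while localizing at each atom recovers the extra mass $S_{\Sigma_D}\mu_j^{p/q}$; since $|\nabla u|^p$ is non-atomic these add up to \eqref{eq:mu_tilde_representation}.

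The main obstacle, and the only genuine departure from \cite{Lions1988}, is the treatment of the point $\infty$ of the compactification (and, in parallel, the vertex $0$). Both are scale-critical: for a cutoff selecting the region $|x|\ge R$, the cross term $R^{-p}\int_{R\le|x|\le 2R}|v_k|^p$ is, by H\"older and $q=\tfrac{np}{n-p}$, only $O(1)$ rather than $o(1)$, so one cannot simply send $R\to\infty$. The remedy is the dilation invariance $Q_{\Sigma_D}(a\,u(b\,\cdot))=Q_{\Sigma_D}(u)$, equivalently an inversion $x\mapsto x/|x|^2$ mapping $\infty$ to the vertex under which the Sobolev quotient on the cone is preserved; this shows that an atom forming at $\infty$ obeys the same lower bound $S_{\Sigma_D}\mu_\infty^{p/q}$ as a finite atom, and legitimizes testing the reverse Sobolev inequality against functions $\phi$ continuous up to $\infty$. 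The vertex $0$ is handled analogously with shrinking balls $B(0,\varepsilon)\cap\Sigma_D$, where the full cone geometry is present so that the constant is again $S_{\Sigma_D}$ rather than the whole-space or half-space constant. Verifying that no mass escapes in the passage to $\overline\Sigma_D$ and that the constant at $\infty$ is exactly $S_{\Sigma_D}$ is where the care is needed; the rest is routine.
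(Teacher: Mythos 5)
Your overall scheme --- subsequence extraction, the Br\'ezis--Lieb reduction to $v_k=u_k-u\rightharpoondown 0$, the reverse Sobolev inequality for the limit measures, Lions' atomicity argument, and the diffuse bound $\widetilde\mu\geqslant|\nabla u|^p$ --- is the same as in the paper. The genuine gap is in the one step you yourself single out as the crucial departure: the atom at $\infty$. Your remedy rests on the claim that the dilation invariance $Q_{\Sigma_D}(a\,u(b\,\cdot))=Q_{\Sigma_D}(u)$ is ``equivalently'' an invariance under the inversion $x\mapsto x/|x|^2$, and that a Kelvin-type transform preserves the Sobolev quotient on the cone. Neither claim holds for general $1<p<n$. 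Dilations and inversions are different transformations, and for $p\neq 2$ the $p$-Dirichlet energy admits no Kelvin transform: writing $v(y)=|y|^{-\beta}u(y/|y|^2)$ and changing variables $x=y/|y|^2$, $dy=|x|^{-2n}dx$, one gets $\int|v|^q\,dy=\int|x|^{\beta q-2n}|u|^q\,dx$, so the $L_q$ norm is preserved only for $\beta=2n/q$, while for the gradient term the cross terms arising in $|\nabla v|^p$ cancel (by integration by parts) only when $p=2$; for $p\in(1,n)$, $p\neq2$, no choice of $\beta$ makes both norms invariant. Hence the lower bound $S_{\Sigma_D}\mu_\infty^{p/q}$ at $\infty$ cannot be obtained by mapping $\infty$ to the vertex, except in the special case $p=2$, whereas the Proposition is needed for all $1<p<n$.

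Moreover, the obstacle you invoke to motivate the inversion is not actually there, and the paper's treatment of $\infty$ is direct. Take $\psi\in\mathcal C^\infty(\overline\Sigma_D)$ with $0\leqslant\psi\leqslant1$, $\mathrm{supp}\,\psi=\overline\Sigma_D\setminus\overline B(0,1)$, $\psi\equiv1$ for $|x|>2$, and test \eqref{eq:case2_ineq} with $\varphi(x)=\psi(x/R)$. Two elementary facts close the argument: (i) $\mathrm{supp}\,\nabla\varphi$ is the \emph{bounded} annulus $\{R\leqslant|x|\leqslant2R\}$, so the Rellich theorem applies there and the cross term passes to the limit in $k$ with $u_k$ replaced by the fixed limit $u$ (in the reduced case $v_k\rightharpoondown0$ it simply vanishes, which already legitimizes test functions that are nonconstant only near $\infty$); and (ii) by the H\"older inequality with exponents $q/p$ and $n/p$,
\begin{equation*}
\int\limits_{\Sigma_D}|\nabla\varphi|^p|u|^p\,dx \;\leqslant\; \|\nabla\psi\|_{L_n}^p\left(\,\int\limits_{\Sigma_D\setminus\overline B(0,R)}|u|^q\,dx\right)^{p/q},
\end{equation*}
where $\|\nabla\varphi\|_{L_n}=\|\nabla\psi\|_{L_n}$ is scale invariant and the second factor tends to $0$ as $R\to\infty$ because $u\in L_q(\Sigma_D)$. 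So after the limit in $k$ the cross term is $o(1)$ in $R$, not merely $O(1)$: your $O(1)$ estimate bounds the pre-limit quantity involving $v_k$ by H\"older alone and ignores the compactness in $k$ on the fixed annulus. With (i)--(ii) the constant at $\infty$ is automatically $S_{\Sigma_D}$, because the inequality being tested is the cone inequality itself; no transformation is needed, and the same computation with $\varphi(x)=\psi((x-x_j)/\varepsilon)$ handles all finite atoms, the vertex included.
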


\begin{proof}
\underline{Case 1.} Consider first the case $u\equiv 0$. For any $\varphi \in \mathcal C^\infty(\overline \Sigma_D)$ by the definition of $S_{\Sigma_D}$ we have
\begin{equation}\label{eq:Sobolev_in_comp_cone}
 \left(\,\int\limits_{\Sigma_D}|\varphi u_k|^q \, dx\right)^{1/q} \leqslant S_{\Sigma_D}^{-1/p} \left(\,\int\limits_{\Sigma_D} |\nabla(\varphi u_k)|^p \, dx\right)^{1/p}.
\end{equation}
The left-hand side integral has a limit
\[
\int\limits_{\Sigma_D}|\varphi u_k|^q \, dx \to \int\limits_{\overline \Sigma_D}|\varphi|^q \, d\mu, \quad n\to\infty.
\]
For the right-hand side in \eqref{eq:Sobolev_in_comp_cone} we write the triangle inequality
\begin{equation}\label{eq:triangle_ineq}
\Big|\|\nabla(\varphi u_k)\|_{L_p(\Sigma_D)} - \|\varphi \nabla u_k\|_{L_p(\Sigma_D)}\Big| \leqslant \| u_k\nabla\varphi\|_{L_p(\Sigma_D)}.
\end{equation}
Consider here $\varphi$ with compact support. Since the weak limit of $u_k$ is $u\equiv 0$, the last norm tends to $0$ due to the Rellich theorem, therefore passing to the limit in \eqref{eq:Sobolev_in_comp_cone} we obtain
\[
\left(\,\int\limits_{\overline \Sigma_D}|\varphi|^q \, d\mu\right)^{1/q} \leqslant S_{\Sigma_D}^{-1/p} \left(\,\int\limits_{\overline \Sigma_D} |\varphi|^p \, d\widetilde{\mu}\right)^{1/p}.
\]
By approximation this implies
\begin{equation}\label{eq:lemma_2.2_mes_abs_cont_ineq}
\mu(E)^{1/q} \leqslant S_{\Sigma_D}^{-1/p} \widetilde{\mu}(E)^{1/p}, \quad \forall E \text{ --- Borel set: } \infty\not\in E.
\end{equation}
Therefore, $\mu$ is absolutely continuous with respect to $\widetilde{\mu}$, i.e. 
\begin{equation}\label{eq:lemma_2.2_mes_abs_cont_int}
\mu(E) = \int\limits_E f \, d\widetilde{\mu}, \quad \forall E \text{ --- Borel set: } \infty\not\in E.
\end{equation}
where $f\in L^+_1(\widetilde{\mu})$,
\begin{equation}\label{eq:lemma_2.2_f_relation}
f(x) = \lim\limits_{r\to 0} \dfrac{\mu(B(x,r))}{\widetilde{\mu}(B(x,r))}, \quad \text{for } \widetilde\mu\text{-a.e. } x\in \overline \Sigma_D. 
\end{equation}
This is a consequence of the theory of symmetric
derivatives of Radon measures; see \cite[pp. 152-169]{Federer}. But from \eqref{eq:lemma_2.2_mes_abs_cont_ineq} for every $B(x,r)$, such that $\widetilde{\mu}(B(x,r))>0$ and $\widetilde\mu(\{x\})=0$ we have
\begin{equation}\label{eq:lemma_2.2_f_zero_ae}
\dfrac{\mu(B(x,r))}{\widetilde{\mu}(B(x,r))} \leqslant S_{\Sigma_D}^{-q/p} \widetilde{\mu}(B(x,r))^{(q-p)/p} \to 0, \quad r\to 0.
\end{equation}
Since $\widetilde\mu$ is a finite measure, the set
\[
J := \{x_j: \widetilde\mu(\{x_j\}) \neq 0\}
\]
is at most countable (even potentially adding $\infty$), and from \eqref{eq:lemma_2.2_mes_abs_cont_int}, \eqref{eq:lemma_2.2_f_relation}, \eqref{eq:lemma_2.2_f_zero_ae} we arrive at \eqref{eq:mu_representation}. 

\underline{Case 2.} Now let's consider $u\not\equiv 0$. The Sobolev inequality \eqref{eq:Sobolev_in_comp_cone} still holds. Denote $v_k = u_k - u$. By the Br\'{e}zis-Lieb lemma \cite{BrezisLieb}, for any $\varphi \in \mathcal C^\infty(\overline \Sigma_D)$ we have
\[
\int\limits_{\Sigma_D}|\varphi u_k|^q \, dx -\int\limits_{\Sigma_D}|\varphi v_k|^q \, dx \to \int\limits_{\Sigma_D}|\varphi u|^q \, dx, \quad n\to\infty.
\]
Applying Case 1 to $v_k$ we obtain the representation \eqref{eq:mu_representation}. Now, passing to the limit in \eqref{eq:Sobolev_in_comp_cone} and taking into account \eqref{eq:triangle_ineq} we write
\begin{equation}\label{eq:case2_ineq}
S_{\Sigma_D}^{1/p} \left(\,\int\limits_{\overline \Sigma_D}|\varphi|^q \, d\mu\right)^{1/q} \leqslant \left(\,\int\limits_{\overline \Sigma_D} |\varphi|^p \, d\widetilde{\mu}\right)^{1/p} + \left(\,\int\limits_{\Sigma_D} |\nabla\varphi|^p|u|^p \, dx\right)^{1/p}.
\end{equation}
For $x_j\neq\infty$ choose $\psi(x)\in \mathcal{C}^\infty_0(B(0,1))$ such that $0\leqslant\psi\leqslant 1$, $\psi(0) = 1$, and then substitute $\varphi(x) = \psi((x-x_j)/\varepsilon)$ into \eqref{eq:case2_ineq}. We obtain 
\[
\mu_j = \int\limits_{\overline \Sigma_D} 1 \, d(\mu_j\delta_{x_j}) \leqslant \int\limits_{\overline \Sigma_D}|\varphi|^q \, d\mu,
\]
\[
\int\limits_{\overline \Sigma_D} |\varphi|^p \, d\widetilde{\mu} \leqslant \widetilde{\mu}(B(x_j, \varepsilon)),
\]
\[
\int\limits_{\Sigma_D} |\nabla\varphi|^p|u|^p \, dx = \int\limits_{B(x_j, \varepsilon)} |\nabla\varphi|^p|u|^p \, dx.
\]
Hereinafter, in the integrals over $B(x_j, \varepsilon)$ we omit the integral outside the cone. Therefore,
\[
S_{\Sigma_D}^{1/p} \mu_j^{1/q} \leqslant (\widetilde{\mu}(B(x_j, \varepsilon))^{1/p} + \left(\,\int\limits_{B(x_j, \varepsilon)} |\nabla\varphi|^p|u|^p \, dx\right)^{1/p}.
\]
By the H\"{o}lder inequality we estimate
\begin{align*}
\int\limits_{B(x_j, \varepsilon)} |\nabla\varphi|^p|u|^p \, dx &\leqslant \left(\,\int\limits_{B(x_j, \varepsilon)} |u|^q \, dx\right)^{p/q}\left(\,\int\limits_{B(x_j, \varepsilon)} |\nabla\varphi|^n \, dx\right)^{p/n}\\
{} &\leqslant \left(\,\int\limits_{B(x_j, \varepsilon)} |u|^q \, dx\right)^{p/q}\left(\,\int\limits_{B(0, 1)} |\nabla\psi|^n \, dx\right)^{p/n}.
\end{align*}
Therefore, 
\[
S_{\Sigma_D}^{1/p} \mu_j^{1/q} \leqslant (\widetilde{\mu}(B(x_j, \varepsilon))^{1/p} + C\left(\,\int\limits_{B(x_j, \varepsilon)} |u|^q \, dx\right)^{1/q}.
\]
As $\varepsilon\to0$, this implies that $\widetilde{\mu}(\{x_j\}) > 0$ and
\[
\widetilde{\mu} \geqslant S_{\Sigma_D} \mu_j^{p/q} \delta_{x_j}, \quad \forall j\in J: x_j\neq\infty.
\]
Now, for the case $x_j = \infty$, consider $\psi(x)\in \mathcal{C}^\infty(\overline \Sigma_D)$ such that $0\leqslant\psi\leqslant 1$, $\psi(x) = 1$ for $|x|>2$, $\mathrm{supp}\,\psi = \overline \Sigma_D \setminus \overline{B}(0,1)$, and then substitute $\varphi(x) = \psi(x/R)$ into \eqref{eq:case2_ineq}. We similarly obtain
\[
S_{\Sigma_D}^{1/p} \mu_j^{1/q} \leqslant (\widetilde{\mu}(\overline \Sigma_D \setminus \overline{B}(0,R))^{1/p} + C\left(\,\int\limits_{\Sigma_D \setminus \overline{B}(0,R)} |u|^q \, dx\right)^{1/q}.
\]
This implies
\[
\widetilde{\mu} \geqslant S_{\Sigma_D} \mu_j^{p/q} \delta_{x_j}, \quad x_j = \infty.
\]
By weak convergence we also have $\widetilde{\mu}\geqslant|\nabla u|^p$, and since $|\nabla u|^p$ and $\delta_{x_j}$ are orthogonal, we obtain \eqref{eq:mu_tilde_representation}.
\end{proof}

\subsection*{Acknowledgements}

This research was supported by the Theoretical Physics and Mathematics Advancement Foundation ``BASIS''.

\end{document}